\newtheorem{theorem}{Theorem}
\newtheorem{corollary}{Corollary}
\newtheorem{lemma}{Lemma}
\theoremstyle{remark}
\begin{document}

\title[Coefficient determinants involving many Fekete-Szeg$\ddot{O}$-type parameters]{\large Coefficient determinants involving many Fekete-Szeg$\ddot{O}$-type parameters}

\author[K. O. Babalola]{K. O. BABALOLA}

\begin{abstract}
We extend our definition (in a recent paper \cite{KB}) of the coefficient determinants of analytic mappings of the unit disk to include many Fekete-Szeg$\ddot{o}$-type parameters, and compute the best possible bounds on certain specific determinants for the choice class of starlike functions. 
\end{abstract}

\maketitle

\noindent
{\small\bf AMS Mathematics Subject Classification (2010)} {\small: 30C45, 30C50.}

\noindent
{\small\bf Keyword}: {\small Hankel determinants, coefficient determinants, Fekete-Szeg$\ddot{o}$ parameters, starlike functions.}
\medskip

\section{Hankel Determinants with Parameters}
In a recent paper \cite{KB}, we extended the definition of the well known Hankel determinant for coefficients of analytic mappings to include the also well known Fekete-Szeg$\ddot{o}$ parameter as follows:\vskip 2mm

{\sc Definition A.} Let $\lambda$ be a nonnegative real number. Then for integers $n\geq 1$ and $q\geq 1$, we define the $q$-th Hankel determinants with Fekete-Szeg$\ddot{o}$ parameter $\lambda$, that is $H_q^\lambda(n)$, as
$$H_q^\lambda(n)=
\begin{vmatrix}
a_n & a_{n+1} & \cdots & \lambda a_{n+q-1}\\
a_{n+1} & \cdots & \cdots & \vdots\\
\vdots & \vdots & \vdots & \vdots\\
a_{n+q-1} & \cdots & \cdots & a_{n+2(q-1)}
\end{vmatrix}$$
with the well known Hankel determinant being the case $\lambda=1$.\vskip 2mm

We also defined other similar determinants as:\vskip 2mm

{\sc Definition B.} Let $\lambda$ be a nonnegative real number. Then for integers $n\geq 1$ and $q\geq 1$, we define the $B_q^\lambda(n)$ determinants as
$$B_q^\lambda(n)=
\begin{vmatrix}
a_n & a_{n+1} & \cdots & a_{n+q-1}\\
a_{n+q} & a_{n+q+1} & \cdots & a_{n+2q-1}\\
a_{n+2q} & a_{n+2q+1} & \cdots & a_{n+3q-1}\\
\vdots & \vdots & \vdots & \vdots\\
a_{n+q(q-1)} & \cdots & \cdots & \lambda a_{n+q^2-1}
\end{vmatrix}.$$

Our motivation for those new definitions lie in the fact that, for function classes defined by other function classes (for example the classes of close-to-star, close-to-convex, quasi-convex, $\alpha$-starlike, $\alpha$-convex, $\alpha$-close-to-star, $\alpha$-close-to-convex whose definitions involve other function classes), coefficient functionals of the form $|a_2a_3-\lambda a_4|$ and $|a_2a_4-\lambda a_3^3|$ (and possibly more) for the defining function classes have frequently appeared to be resolved in the investigations of Hankel determinants for the desired classes of functions.\vskip 2mm

In this paper, we further extend these definitions to include finitely many Fekete-Szeg$\ddot{o}$ parameters $\lambda_j$, $j=1,2,\cdots$ in order to accomodate a wide variety of emerging functionals in the study of coefficients of mappings of the unit disk. Now we say:\vskip 2mm

{\sc Definition 1.} Let $\lambda_i$, $i=1,2,\cdots,q$ be nonnegative real numbers. Then for integers $n\geq 1$ and $q\geq 1$, we define the $q$-th Hankel determinants with Fekete-Szeg$\ddot{o}$ parameters $\lambda_i$, that is $H_q^{\lambda_1,\lambda_2,\cdots,\lambda_q}(n)$, as
$$H_q^{\lambda_1,\lambda_2,\cdots,\lambda_q}(n)=
\begin{vmatrix}
\lambda_1a_n & \lambda_2a_{n+1} & \cdots & \lambda_q a_{n+q-1}\\
a_{n+1} & \cdots & \cdots & \vdots\\
\vdots & \vdots & \vdots & \vdots\\
a_{n+q-1} & \cdots & \cdots & a_{n+2(q-1)}
\end{vmatrix}$$ 
and we also say:\vskip 2mm

{\sc Definition 2.} Let $\lambda_j$, $j=1,2,\cdots,n$ be nonnegative real numbers. Then for integers $n\geq 1$ and $q\geq 1$, we define the $B_q^{\lambda_1,\lambda_2,\cdots,\lambda_n}(n)$ determinants as
$$B_q^{\lambda_1,\lambda_2,\cdots,\lambda_n}(n)=
\begin{vmatrix}
a_n & a_{n+1} & \cdots & \lambda_1a_{n+q-1}\\
a_{n+q} & a_{n+q+1} & \cdots & \lambda_2a_{n+2q-1}\\
a_{n+2q} & a_{n+2q+1} & \cdots & \lambda_3a_{n+3q-1}\\
\vdots & \vdots & \vdots & \vdots\\
a_{n+q(q-1)} & \cdots & \cdots & \lambda_n a_{n+q^2-1}
\end{vmatrix}.$$

For $\lambda_j=1$, $j=1,2,\cdots,q-1 (n-1)$, we simply write $H_q^\lambda(n)$ and $B_q^\lambda(n)$ in place of $H_q^{1,1,\cdots,1,\lambda_q}(n)$ and $B_q^{1,1,\cdots,1,\lambda_n}(n)$ respectively, and we quickly note that for real numbers, $\gamma$, $\alpha$ and $\beta$ we have:\vskip 2mm 

$$H_2^\gamma(1)=\begin{vmatrix}
1 & \gamma a_2\\
a_2 & a_3
\end{vmatrix}=a_3-\gamma a_2^2,$$
$$H_2^\alpha(2)=\begin{vmatrix}
a_2 & \alpha a_3\\
a_3 & a_4
\end{vmatrix}=a_2a_4-\alpha a_3^2$$ and 
$$B_2^\beta(1)=\begin{vmatrix}
1 & a_2\\
a_3 & \beta a_4
\end{vmatrix}=a_2a_3-\beta a_4.$$ 

Then, in this paper we shall investigate the determinant $H_3^{\lambda_1,\lambda_2,\lambda_3}(1)$ for the favoured and well known class of starlike functions, for which Re $zf'(z)/f(z)>0$, denoted by $S^\ast$. By definition 
$$H_3^{\lambda_1,\lambda_2,\lambda_3}(1)=
\begin{vmatrix}
\lambda_1a_1 & \lambda_2a_2 & \lambda_3a_3\\
a_2 & a_3 & a_4\\
a_3 & a_4 & a_5
\end{vmatrix}.$$
For $f\in S^\ast$, $a_1=1$ so that
$$\aligned
H_3^{\lambda_1,\lambda_2,\lambda_3}(1)
&=a_3(\lambda_2a_2a_4-\lambda_3a_3^2)-a_4(\lambda_1a_4-\lambda_3a_2a_3)+a_5(\lambda_1a_3-\lambda_2a_2^2)\\
&=\lambda_2a_3(a_2a_4-\alpha a_3^2)+\lambda_3a_4(a_2a_3-\beta a_4)+\lambda_1a_5(a_3-\gamma a_2^2)
\endaligned$$
where $\alpha=\lambda_3/\lambda_2$, $\beta=\lambda_1/\lambda_3$ and $\gamma=\lambda_2/\lambda_1$. Thus by triangle inequality, we have
$$|H_3^{\lambda_1,\lambda_2,\lambda_3}(1)|\leq\lambda_1|a_5||H_2^\gamma(1)|+\lambda_2|a_3||H_2^\alpha(2)|+\lambda_3|a_4||B_2^\beta(1)|.\eqno{(1.2)}$$
\vskip 2mm

Keen attention is paid to the dynamics of the Fekete-Szeg$\ddot{o}$ parameters in this paper, which leads to ease of analysis and better precision. For our choice class, $S^\ast$, the Fekete-Szeg$\ddot{o}$ functional, $|H_2^\gamma(1)|$, is known and given as:

\begin{theorem}[\cite{FR}]
Let $f\in S^\ast$. Then for real number $\gamma$, $$|H_2^\gamma(1)|\leq\max\{1,|4\gamma-3|\}.$$ The inequality is sharp. For each $\gamma$, equality is attained by $f(z)$ given by
$$f(z)=
\begin{cases}
\frac{z}{1-z^2}&\mbox if\quad \frac{1}{2}\leq\gamma\leq 1,\\
\frac{z}{(1-z)^2}&\mbox if\quad\gamma\in\left[0,\frac{1}{2}\right]\cup[1,\infty).
\end{cases}$$
\end{theorem}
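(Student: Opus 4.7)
My plan is to pass from the starlikeness condition to the Carathéodory class and reduce the problem to a Fekete-Szegő inequality for functions with positive real part. Writing $p(z)=zf'(z)/f(z)=1+p_1z+p_2z^2+\cdots$, the hypothesis $f\in S^\ast$ is exactly that $p\in\mathcal{P}$. First I would expand the identity $zf'(z)=f(z)p(z)$ and equate like powers of $z$ to obtain the standard relations
$$a_2=p_1,\qquad 2a_3=p_1^2+p_2,$$
so that
$$a_3-\gamma a_2^2=\tfrac{1}{2}\bigl[p_2-(2\gamma-1)p_1^2\bigr].$$
Thus the target inequality becomes $|p_2-\mu p_1^2|\leq 2\max\{1,|2\mu-1|\}$ with $\mu=2\gamma-1$.

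Next I would invoke the Carathéodory-Toeplitz representation for $\mathcal{P}$: we may assume after rotation that $p_1=c\in[0,2]$, and then
$$p_2=\tfrac{c^2}{2}+\tfrac{4-c^2}{2}x,\qquad |x|\leq 1.$$
Substituting and applying the triangle inequality gives
$$|p_2-\mu p_1^2|\leq\bigl|\tfrac{1}{2}-\mu\bigr|c^2+\tfrac{4-c^2}{2},$$
a linear function of $c^2$ on $[0,4]$. Its maximum is attained at $c=0$ when $|2\mu-1|\leq 1$, yielding the value $2$; and at $c=2$ when $|2\mu-1|>1$, yielding the value $|1-2\mu|\cdot 2$. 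In both cases the bound is $2\max\{1,|2\mu-1|\}$, which after the factor $1/2$ translates to $\max\{1,|4\gamma-3|\}$.

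Finally I would verify sharpness by exhibiting the two extremal starlike functions. The choice $p(z)=(1+z^2)/(1-z^2)$ corresponds via $zf'(z)/f(z)=p(z)$ to $f(z)=z/(1-z^2)$, for which $a_2=0$, $a_3=1$, and so $|a_3-\gamma a_2^2|=1$; this is extremal in the regime where $\max\{1,|4\gamma-3|\}=1$, namely $1/2\leq\gamma\leq 1$. The choice $p(z)=(1+z)/(1-z)$ produces the Koebe function $f(z)=z/(1-z)^2$, whose coefficients $a_n=n$ give $|a_3-\gamma a_2^2|=|3-4\gamma|$; this realizes the bound in the complementary range $\gamma\in[0,1/2]\cup[1,\infty)$.

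The only mildly delicate point is the case-split in the maximization of $|\tfrac{1}{2}-\mu|c^2+(4-c^2)/2$ on $[0,2]$, since the sign of the coefficient of $c^2$ determines which endpoint is extremal; but this is a one-variable convexity check and not a genuine obstacle. The substantive input is just the parametrization of $\mathcal{P}$, which makes the whole argument a short computation.
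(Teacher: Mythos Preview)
Your argument is correct. Note, however, that the paper does not supply its own proof of this statement: it is quoted as a known result of Keogh and Merkes and simply cited, so there is no proof in the text to compare against.

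That said, your approach fits naturally into the paper's framework. After reaching
\[
a_3-\gamma a_2^2=\tfrac{1}{2}\bigl[p_2-(2\gamma-1)p_1^2\bigr],
\]
you could in fact bypass the explicit $c,x$ parametrization and invoke the paper's Lemma~2 directly with $\sigma=4\gamma-2$, obtaining
\[
\bigl|p_2-(2\gamma-1)p_1^2\bigr|=\Bigl|p_2-\sigma\,\frac{p_1^2}{2}\Bigr|\leq 2\max\{1,|\sigma-1|\}=2\max\{1,|4\gamma-3|\}.
\]
What you have done instead is essentially to re-derive Lemma~2 from the Libera--Zlotkiewicz representation (the paper's Lemma~3). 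Both routes are valid; using Lemma~2 directly is shorter, while your explicit maximization over $c$ and $x$ has the advantage of being self-contained and making the extremal configurations $(c,x)=(0,\pm1)$ and $(c,x)=(2,\cdot)$ visible, which matches the two extremal functions you then identify.
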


Thus we shall in this paper obtain the best possible bounds on $H_2^\alpha(2)$ and $B_2^\beta(1)$ to conclude our investigation. In the next section we state the lemmas we shall use to establish the desired bounds in Section 3.
 
\section{Preliminary Lemmas}

Let $P$ denote the class of functions $p(z)=1+c_1z+c_2z^2+\cdots$ which are regular in $E$ and satisfy Re $p(z)>0$, $z\in E$. To prove the main results in the next section we shall require the following two lemmas.\vskip 2mm

\begin{lemma} [\cite{PL}]
Let $p\in P$, then $|c_k|\leq 2$, $k=1,2,\cdots$, and the inequality is sharp. Equality is realized by the M$\ddot{o}$bius function $L_0(z)=(1+z)/(1-z)$.
\end{lemma}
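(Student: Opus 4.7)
The plan is to deduce the sharp bound from the Herglotz (Riesz) integral representation of the class $P$. Every $p\in P$ admits a representation
$$p(z)=\int_0^{2\pi}\frac{e^{it}+z}{e^{it}-z}\,d\mu(t),$$
where $\mu$ is a probability measure on $[0,2\pi]$; this follows from the Poisson integral representation applied to the positive harmonic function Re $p(z)$, together with the normalization $p(0)=1$ which forces $\mu([0,2\pi])=1$.

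With the representation in hand, I would expand the Herglotz kernel as a geometric series,
$$\frac{e^{it}+z}{e^{it}-z}=1+2\sum_{k=1}^{\infty}e^{-ikt}z^k,$$
and then interchange summation and integration (justified by uniform convergence on compact subsets of $E$) to read off $c_k=2\int_0^{2\pi}e^{-ikt}\,d\mu(t)$. The triangle inequality applied to this integral gives $|c_k|\leq 2\mu([0,2\pi])=2$ simultaneously for every $k\geq 1$, which is the desired estimate.

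For the sharpness claim, a direct geometric expansion of the M\"obius function yields
$$L_0(z)=\frac{1+z}{1-z}=1+2\sum_{k=1}^{\infty}z^k,$$
so every coefficient equals $2$ at once, and equality in the lemma is attained by the single extremal function $L_0$ at every index. In the Herglotz picture this simply corresponds to choosing $\mu$ to be the Dirac measure concentrated at $t=0$.

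The only nontrivial input is the Herglotz representation itself, which is a standard but nonelementary ingredient (ultimately an application of the Riesz representation theorem to $C([0,2\pi])$). If one wished to avoid this machinery, an alternative route is to observe the subordination $p\prec L_0$, since $L_0$ maps $E$ conformally onto the right half-plane with $L_0(0)=p(0)=1$, and then invoke Rogosinski's coefficient estimate for subordinate functions to recover $|c_k|\leq 2$. I expect the Herglotz approach to be cleanest overall, and I do not anticipate any technical obstacle beyond that initial representation step.
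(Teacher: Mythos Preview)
Your argument is correct: the Herglotz representation yields $c_k=2\int_0^{2\pi}e^{-ikt}\,d\mu(t)$ for a probability measure $\mu$, and the bound $|c_k|\leq 2$ together with the extremal role of $L_0$ follows immediately. This is the classical proof of Carath\'eodory's coefficient inequality.

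There is, however, nothing to compare it against: the paper does not prove this lemma at all. It is stated as a preliminary result with a citation to Duren's monograph \cite{PL}, and is then used as a black box (via Lemma~1) in the proofs of Theorems~2 and~3. So your proposal supplies a proof where the paper gives none; it is a standard and appropriate one, and either the Herglotz route or the subordination alternative you mention would be acceptable.
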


\begin{lemma} [\cite{KT}]
Let $p\in P$. Then 
$$\left|c_2-\sigma\frac{c_1^2}{2}\right|\leq
\begin{cases}
2(1-\sigma), & \mbox{$\sigma\leq 0$},\\
2,&\mbox{$0\leq\sigma\leq 2$},\\
2(\sigma-1),&\mbox{$\sigma\geq 2$}
\end{cases}=2\max\{1,|\sigma-1|\}.$$ The inequality is sharp. For each $\sigma$, equality is attained by $p(z)$ given by
$$p(z)=
\begin{cases}
\frac{1+z^2}{1-z^2}&\mbox if\quad 0\leq\sigma\leq 2,\\
\frac{1+z}{1-z}&\mbox if\quad\gamma\in[-\infty,0]\cup[2,\infty).
\end{cases}$$
\end{lemma}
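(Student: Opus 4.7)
The plan is to reduce Lemma 2 to a one-variable optimization via the Carath\'eodory--Toeplitz parametrization. For $p\in P$ one has $|c_1|\leq 2$ and, after a rotation $p(z)\mapsto p(e^{i\theta}z)$ normalizing $c_1$ to lie in $[0,2]$, the standard representation
$$2c_2=c_1^2+(4-c_1^2)x\qquad\text{for some $x$ with $|x|\leq 1$.}$$
This rotation is harmless because the functional is rotation covariant: $(c_1,c_2)\mapsto(e^{i\theta}c_1,e^{2i\theta}c_2)$ multiplies $c_2-\sigma c_1^2/2$ by the unit factor $e^{2i\theta}$, leaving its modulus unchanged.

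Substituting and applying the triangle inequality yields, with $c:=c_1\in[0,2]$,
$$\left|c_2-\sigma\frac{c_1^2}{2}\right|\leq |1-\sigma|\frac{c^2}{2}+\frac{4-c^2}{2}=2+\frac{c^2}{2}\bigl(|1-\sigma|-1\bigr).$$
The right-hand side is linear in $c^2$, so its maximum over $[0,2]$ is attained at $c=0$ when $|1-\sigma|\leq 1$, yielding $2$, and at $c=2$ when $|1-\sigma|\geq 1$, yielding $2|1-\sigma|$. Combining the two regimes gives $2\max\{1,|\sigma-1|\}$, which is the claimed bound.

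For sharpness I would verify the two candidates named in the statement. The function $p(z)=(1+z^2)/(1-z^2)$ has $c_1=0$ and $c_2=2$, so $|c_2-\sigma c_1^2/2|=2$, matching the bound on $0\leq\sigma\leq 2$. The M\"obius function $p(z)=(1+z)/(1-z)$ has $c_1=c_2=2$, giving $|c_2-\sigma c_1^2/2|=|2-2\sigma|=2|1-\sigma|$, matching the bound for $\sigma\notin[0,2]$. The only substantive ingredient is the Carath\'eodory parametrization; the rest is elementary case analysis, so I do not anticipate any serious obstacle, though one must be attentive to the sign conventions to see that the two extremal regimes glue together at $\sigma=0$ and $\sigma=2$ without gaps.
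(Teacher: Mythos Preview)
Your argument is correct. The paper does not actually prove this lemma; it is stated as a preliminary result cited from \cite{KT}, so there is no in-paper proof to compare against. Your reduction via the representation $2c_2=c_1^2+(4-c_1^2)x$ is precisely equation~(2.1) of Lemma~3 in the paper, so your proof fits naturally into the paper's toolkit. The rotation normalization, the triangle inequality step, and the linear-in-$c^2$ maximization are all sound, and your sharpness verifications for $(1+z^2)/(1-z^2)$ and $(1+z)/(1-z)$ are accurate.
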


\begin{lemma} [\cite{RJ}]
Let $p\in P$, then 
$$2c_2=c_1^2+x(4-c_1^2)\eqno{(2.1)}$$
and
$$4c_3=c_1^3+2xc_1(4-c_1^2)-x^2c_1(4-c_1^2)+2z(1-|x|^2)(4-c_1^2)\eqno{(2.2)}$$
for some $x$, $z$ such that $|x|\leq 1$ and $|z|\leq 1$.
\end{lemma}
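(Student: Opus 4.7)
My plan is to derive both identities via the classical Schur parametrization of Carath\'eodory functions. By applying a rotation $p(z)\mapsto p(e^{i\theta}z)$ we may assume $c_{1}\in[0,2]$ at the outset, so that all conjugates below collapse and the bookkeeping matches the stated formulas.

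First I would pass to the associated Schur function
$$\omega(z)=\frac{p(z)-1}{p(z)+1},$$
which satisfies $\omega(0)=0$ and $|\omega(z)|<1$ on the unit disk. Expanding $p=(1+\omega)/(1-\omega)=1+2\omega+2\omega^{2}+\cdots$ and matching coefficients yields
$$\omega_{1}=\frac{c_{1}}{2},\qquad \omega_{2}=\frac{2c_{2}-c_{1}^{2}}{4},\qquad \omega_{3}=\frac{4c_{3}-4c_{1}c_{2}+c_{1}^{3}}{8}.$$
By Schwarz's lemma, $g(\zeta)=\omega(\zeta)/\zeta$ is a Schur function with $g(0)=\omega_{1}=c_{1}/2$, and one step of the Schur algorithm produces the further Schur function
$$g_{1}(\zeta)=\frac{g(\zeta)-g(0)}{\zeta\bigl(1-g(0)\,g(\zeta)\bigr)}.$$
Setting $x=g_{1}(0)$, so $|x|\le 1$, and computing $g_{1}(0)=g'(0)/(1-g(0)^{2})=\omega_{2}/(1-\omega_{1}^{2})$, a straightforward rearrangement gives $2c_{2}-c_{1}^{2}=x(4-c_{1}^{2})$, which is (2.1).

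For (2.2) I would iterate once more: let $g_{2}$ denote the Schur function obtained from $g_{1}$ by the same M\"obius recipe and put $z=g_{2}(0)$, so $|z|\le 1$. Then $g_{1}'(0)=z(1-|x|^{2})$. On the other hand, expanding the quotient defining $g_{1}$ directly at the origin yields
$$g_{1}'(0)=\frac{\omega_{3}}{1-\omega_{1}^{2}}+\frac{\omega_{1}\,\omega_{2}^{2}}{(1-\omega_{1}^{2})^{2}}.$$
Equating the two expressions for $g_{1}'(0)$, substituting the formulas for $\omega_{1},\omega_{2},\omega_{3}$ above and using (2.1) to eliminate $c_{2}$, then clearing denominators, produces exactly the stated identity (2.2).

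The main obstacle I anticipate is the bookkeeping in the derivative computation for $g_{1}'(0)$: the correction term $\omega_{1}\omega_{2}^{2}/(1-\omega_{1}^{2})^{2}$ is precisely what generates the $-x^{2}c_{1}(4-c_{1}^{2})$ summand in (2.2), and it is easy to overlook on a first attempt. The degenerate case $c_{1}=2$ needs only brief comment: there the maximum principle forces $g$ to be constant, all higher $\omega_{n}$ vanish, and both identities are satisfied trivially since the factor $4-c_{1}^{2}$ is zero, making the choices of $x$ and $z$ immaterial.
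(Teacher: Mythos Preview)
Your argument is correct. The paper does not supply its own proof of this lemma; it is quoted as a preliminary result from Libera and Zlotkiewicz \cite{RJ}, so there is nothing in the paper to compare against directly. That said, your approach via the Schur algorithm---pass to the Schwarz function $\omega=(p-1)/(p+1)$, set $g=\omega/\zeta$, and peel off the Schur parameters $x=g_1(0)$, $z=g_2(0)$---is essentially the classical derivation used in the original source, so the route is the standard one rather than a genuinely different method.

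A couple of minor remarks. First, your rotation to make $c_1\in[0,2]$ is exactly the normalization under which the stated formulas (with $4-c_1^2$ rather than $4-|c_1|^2$, and with $g(0)$ rather than $\overline{g(0)}$ in the Schur step) are literally valid; without it one picks up conjugates, so it is worth saying explicitly---as you do---that the lemma as written presupposes this normalization, which is also how it is used later in the paper. Second, besides the boundary case $|c_1|=2$ you mention, the case $|x|=1$ (with $|c_1|<2$) deserves the same one-line treatment: then $g_1$ is a unimodular constant, $g_1'(0)=0$, and (2.2) holds for any $z$ since $1-|x|^2=0$. Neither point affects the correctness of your derivation.
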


\section{Inequalities for $|H_3^{\lambda_1,\lambda_2,\lambda_3}(1)|$ of $S^\ast$}
First we prove:
\begin{theorem}
Let $f\in S^\ast$. Then for real number $\beta$, $$|B_2^\beta(1)|\leq 2\max\{\beta,|3-2\beta|\}.$$
The inequalities are sharp. For each $\beta$, equality is attained by $f(z)$ given by
$$f(z)=
\begin{cases}
\frac{z}{(1-z^3)^2}&\mbox if\quad 1\leq\beta\leq 3,\\
\frac{z}{(1-z)^2}&\mbox if\quad\beta\in[0,1]\cup[3,\infty).
\end{cases}$$
\end{theorem}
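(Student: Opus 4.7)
The plan is to exploit the Carathéodory representation $zf'(z)/f(z)=p(z)$ with $p\in P$ and reduce $|B_2^\beta(1)|$ to a real two-variable optimisation via Lemma 3. First, equating coefficients in $zf'=fp$ gives $a_2=c_1$, $a_3=(c_1^2+c_2)/2$, and $a_4=(c_1^3+3c_1c_2+2c_3)/6$, so that
\[6(a_2a_3-\beta a_4)=(3-\beta)c_1^3+3(1-\beta)c_1c_2-2\beta c_3.\]
The standard rotation $f(z)\mapsto e^{-i\theta}f(e^{i\theta}z)$ preserves $|a_2a_3-\beta a_4|$ and sends $c_1$ to $e^{i\theta}c_1$, so I may assume $c_1=c\in[0,2]$.

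Next, substituting the Libera--Zlotkiewicz formulas from Lemma 3, namely $2c_2=c^2+x(4-c^2)$ and $4c_3=c^3+2xc(4-c^2)-x^2c(4-c^2)+2z(1-|x|^2)(4-c^2)$, collecting terms, and applying the triangle inequality together with $|z|\le 1$, yields
\[12|a_2a_3-\beta a_4|\le F(c,\mu):=|9-6\beta|c^3+|3-5\beta|c(4-c^2)\mu+\beta c(4-c^2)\mu^2+2\beta(1-\mu^2)(4-c^2),\]
where $\mu=|x|\in[0,1]$. This reduces the whole question to maximising $F(c,\mu)$ over the rectangle $[0,2]\times[0,1]$.

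For fixed $c$, $F(c,\cdot)$ is quadratic in $\mu$ with leading coefficient $\beta(4-c^2)(c-2)\le 0$, so its maximum is attained either at $\mu=0$, at $\mu=1$, or at the interior critical point $\mu^*=|3-5\beta|c/[2\beta(2-c)]$ whenever that lies in $(0,1)$. Maximising along each of these candidate curves and then in $c$ gives a finite list of candidate suprema; by a case split at the natural breakpoints $\beta=3/5,\,1,\,3/2,\,3$ (where $3-5\beta$ and $9-6\beta$ change sign and where $\beta$ overtakes $|3-2\beta|$), the maximum in every case matches either $24\beta$ (hence $2\beta$ after dividing by $12$) or $8|9-6\beta|=24|3-2\beta|$ (hence $2|3-2\beta|$), delivering the announced bound $2\max\{\beta,|3-2\beta|\}$. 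Sharpness is then checked by direct computation of $a_2,a_3,a_4$ from the two displayed extremal functions and evaluation of $|a_2a_3-\beta a_4|$. The main obstacle is precisely this casework: tracking which boundary or interior arc of $[0,2]\times[0,1]$ governs the supremum on each subinterval of $\beta$ and ruling out any interior configuration (via $\mu^*$) that might exceed the claimed bound.
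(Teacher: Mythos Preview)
Your reduction $12|a_2a_3-\beta a_4|\le F(c,\mu)$ via Lemma~3 is correct, but both the route and the crucial optimisation step differ from the paper, and the latter contains a real gap.

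\medskip
\noindent\textbf{How the paper argues.} The paper does \emph{not} substitute Lemma~3 uniformly. It treats the three $\beta$-ranges by three different devices: for $0\le\beta\le 1$ it first splits off the $c_1c_2$-term (bounded crudely by Lemma~1) and substitutes only $c_3$ via Lemma~3; for $1\le\beta\le 3$ it uses only Lemmas~1 and~2, writing
\[
|B_2^\beta(1)|\le \tfrac{\beta}{3}|c_3|+\tfrac{\beta-1}{2}|c_1|\,\Bigl|c_2-\tfrac{\beta-3}{3(\beta-1)}\tfrac{c_1^2}{2}\Bigr|\le 2\beta;
\]
for $\beta\ge 3$ it applies Lemma~1 to each term separately. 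No two-variable optimisation over $(c,\mu)$ is carried out in the last two ranges.

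\medskip
\noindent\textbf{The gap in your argument.} The step ``the maximum in every case matches either $24\beta$ or $24|3-2\beta|$'' is asserted but not verified, and it is \emph{false} on $1<\beta<3$. For example at $\beta=2$ your function is
\[
F(c,\mu)=3c^3+7c(4-c^2)\mu+2c(4-c^2)\mu^2+4(1-\mu^2)(4-c^2),
\]
and a direct check gives $\max_{[0,2]\times[0,1]}F=F(\sqrt{2},1)=24\sqrt{2}\approx 33.94$, strictly less than $24\beta=48$. So your optimisation, done honestly, yields a bound \emph{sharper} than $2\beta$ on $(1,3)$, not equal to it; the claimed matching cannot hold. (The paper's cruder Lemma~1/Lemma~2 argument reaches $2\beta$ with slack, which is why no such discrepancy arises there.) As stated, your proof of the inequality is therefore incomplete: the promised casework would not deliver the announced values.

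\medskip
\noindent\textbf{Sharpness.} Your final step, ``checked by direct computation from the two displayed extremal functions,'' will also fail on $1<\beta<3$. The stated extremal $f(z)=z/(1-z^3)^2$ satisfies $zf'/f=(1+5z^3)/(1-z^3)$, which equals $-2$ when $z^3=-1$; hence $f\notin S^\ast$. A careful verification would reveal this, so sharpness on that range cannot be confirmed as proposed (and indeed the bound $2\beta$ is not attained there by any $f\in S^\ast$, consistent with $\max F<24\beta$).
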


\begin{proof}
It is well known that if $f\in S^\ast$, then $a_2=c_1$, $2a_3=c_2+c_1^2$ and $6a_4=2c_3+3c_1c_2+c_1^3$. Thus we have
$$|B_2^\beta(1)|=|a_2a_3-\beta a_4|=\left|\frac{3-\beta}{6}c_1^3+\frac{1-\beta}{2}c_1c_2-\beta\frac{c_3}{3}\right|.\eqno{(3.1)}$$
First observe that if $\beta\leq 1$, then (3.1) yields
$$|B_2^\beta(1)|\leq\left|\frac{3-\beta}{6}c_1^3-\beta\frac{c_3}{3}\right|+\left|\frac{1-\beta}{2}c_1c_2\right|$$
which, by Lemma 1 gives
$$|B_2^\beta(1)|\leq\left|\frac{3-\beta}{6}c_1^3-\beta\frac{c_3}{3}\right|+2(1-\beta).\eqno{(3.2)}$$
Now substituting for $c_3$ using Lemma 3, we have
$$\aligned \left|\frac{3-\beta}{6}c_1^3-\beta\frac{c_3}{3}\right|
&=\left|\frac{(2-\beta)c_1^3}{4}-\frac{\beta c_1(4-c_1^2)x}{6}+\frac{\beta c_1(4-c_1^2)^2x^2}{12}\right.\\
&\left.-\frac{\beta(4-c_1^2)(1-|x|^2)z}{6}\right|.
\endaligned$$
By Lemma 1 again, $|c_1|\leq 2$. Then letting $c_1=c$, we may assume without restriction that $c\in [0,2]$ so that
$$\aligned \left|\frac{3-\beta}{6}c_1^3-\beta\frac{c_3}{3}\right|\leq
&\frac{(2-\beta)c^3}{4}+\frac{\beta(4-c^2)}{6}+\frac{\beta c(4-c^2)\rho}{6}+\frac{\beta (c-2)(4-c^2)\rho^2}{12}\\
&=F(\rho).
\endaligned$$
Since $\beta\leq 1$, the extreme points of $F(\rho)$ are $\rho=0$, $\rho=1$ and $\rho=c/(2-c)$ (with $c\in[0,1]$ in this case since $\rho\in[0,1]$). Now let $$G_1(c)=F(0)=\frac{(2-\beta)c^3}{4}+\frac{\beta(4-c^2)}{6}$$
\begin{multline*}
G_2(c)=F(1)=\frac{(2-\beta)c^3}{4}+\frac{\beta(4-c^2)}{6}+\frac{\beta c(4-c^2)}{6}+\frac{\beta(c-2)(4-c^2)}{12}\\
=\frac{(1-\beta)c^3}{2}+\beta c
\end{multline*} and
\begin{multline*}
G_3(c)=F\left(\frac{c}{2-c}\right)=\frac{(2-\beta)c^3}{4}+\frac{\beta(4-c^2)}{6}+\frac{\beta c^2(c+2)}{12}\\
=\frac{(3-\beta)c^3}{6}+\frac{2\beta}{3},\;c\in[0,1].
\end{multline*}
By elementary calculus, we find that $G_1(c)\leq G_2(c)\leq G_c(2)=4-2\beta$ while $G_3(c)\leq G_3(1)=(1+\beta)/2$. Hence for $\beta\leq 1$ the maximum of the functional is $4-2\beta$. Using this in (3.3) we have $|B_2^\beta(1)|\leq 6-4\beta$ for $0\leq\beta\leq 1$.\vskip 2mm

Next we consider the case $1\leq\beta\leq 3$. Then we write (3.1) 
$$|B_2^\beta(1)|=\left|\beta\frac{c_3}{3}+\frac{\beta-1}{2}c_1c_2-\frac{\beta-3}{6}c_1^3\right|$$
which gives
$$|B_2^\beta(1)|\leq\beta\frac{|c_3|}{3}+\frac{\beta-1}{2}|c_1|\left|c_2-\frac{\beta-3}{3(\beta-1)}\frac{c_1^2}{2}\right|.$$
Observing that $(\beta-3)/(3\beta-3)\leq 0$ and applying Lemmas 1 and 2, we have $|B_2^\beta(1)|\leq 2\beta$ for $1\leq\beta\leq 3$.\vskip 2mm

Finally, if $\beta\geq 3$ we write (3.1) as
$$|B_2^\beta(1)|=\left|\beta\frac{c_3}{3}+\frac{\beta-1}{2}c_1c_2+\frac{\beta-3}{6}c_1^3\right|$$
and apply Lemma 1 to obtain $|B_2^\beta(1)|\leq 4\beta-6$ in this case. Hence we have 
$$|B_2^\beta(1)|\leq
\begin{cases}
6-4\beta&\mbox if\quad 0\leq\beta\leq 1,\\
2\beta&\mbox if\quad 1\leq\beta\leq 3,\\
4\beta-6&\mbox if\quad\beta\geq 3.
\end{cases}$$
This completes the proof.
\end{proof}

\begin{corollary}[\cite{KO}]
Let $f\in S^\ast$. Then
$$|B_2(1)|\leq 2.$$
The inequality is sharp. Equality is attained by the Koebe function $k(z)=z/(1-z)^2$.
\end{corollary}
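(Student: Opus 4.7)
The plan is to deduce the corollary as an immediate specialization of Theorem 2 at $\beta=1$, and then verify the sharpness claim by a direct computation with the Koebe function.

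First I would evaluate the right-hand side of the bound from Theorem 2 at $\beta=1$. Since $B_2(1)$ is shorthand for $B_2^{1}(1)=a_2a_3-a_4$ (the case $\beta=1$ in the notation introduced in Section 1), Theorem 2 gives
\[
|B_2(1)|=|B_2^{1}(1)|\leq 2\max\{1,\,|3-2\cdot 1|\}=2\max\{1,1\}=2,
\]
which is exactly the asserted inequality. Equivalently, one can notice that $\beta=1$ sits at the boundary between the first two piecewise regimes in the concluding display of Theorem 2: both $6-4\beta$ and $2\beta$ evaluate to $2$ at $\beta=1$, so the bound is continuous there and equals $2$.

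For sharpness I would exhibit the Koebe function $k(z)=z/(1-z)^2\in S^\ast$, whose Taylor coefficients are $a_n=n$ for all $n\geq 1$. Substituting $a_2=2$, $a_3=3$, $a_4=4$ into $B_2(1)=a_2a_3-a_4$ gives
\[
|B_2(1)|=|2\cdot 3-4|=2,
\]
so the bound is attained. This matches the extremal choice in Theorem 2 restricted to $\beta=1$, where both extremal functions listed there reduce to (or are tied with) the Koebe function.

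There is essentially no obstacle here: the corollary is a direct readout of Theorem 2 at the specific parameter value $\beta=1$, paired with a one-line coefficient computation to confirm that equality is realized by $k(z)$.
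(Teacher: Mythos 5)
Your proposal is correct and matches the paper's intent exactly: the corollary is stated as an immediate specialization of Theorem 2 at $\beta=1$, where the bound $2\max\{\beta,|3-2\beta|\}$ evaluates to $2$, and the Koebe function (the stated extremal for $\beta\in[0,1]\cup[3,\infty)$) gives $|a_2a_3-a_4|=|6-4|=2$. Nothing further is needed.
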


Next we have
 
\begin{theorem}
Let $f\in S^\ast$. Then for real number $\alpha$,
$$|H_2^\alpha(2)|\leq\max\{1,|9\alpha-8|\}.$$ The inequalities are sharp. For each $\alpha$, equality is attained by $f(z)$ given by
$$f(z)=
\begin{cases}
\frac{z}{1-z^2/\sqrt{\alpha}}&\mbox if\quad\frac{2}{3}<\alpha\leq 1,\\
\frac{z}{(1-z)^2}&\mbox if\quad\alpha\in\left[0,\frac{2}{3}\right]\cup[1,\infty).
\end{cases}$$
\end{theorem}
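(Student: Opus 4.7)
The plan is to follow the template of Theorem~2. First, using the starlike coefficient identities $a_2=c_1$, $2a_3=c_2+c_1^2$, and $6a_4=2c_3+3c_1c_2+c_1^3$, where the $c_k$ are the Taylor coefficients of $p(z)=zf'(z)/f(z)\in P$, one obtains the master identity
\[
12\,H_2^\alpha(2)\;=\;4c_1c_3+6(1-\alpha)c_1^2c_2-3\alpha c_2^2+(2-3\alpha)c_1^4.
\]
The next move is to parametrise $c_2$ and $c_3$ via the Libera--Z{\l}otkiewicz formulas of Lemma~3, set $c_1=c\in[0,2]$ (WLOG by rotation) and $\rho=|x|\in[0,1]$, and absorb $|z|\le 1$ through the triangle inequality so as to obtain a function of $(c,\rho)$ alone to maximise.

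I would then split the argument into three $\alpha$-regimes dictated by the signs of $(2-3\alpha)$ and $(1-\alpha)$. In the \emph{low regime} $0\le\alpha\le\tfrac{2}{3}$ (both nonnegative), the reduction mirrors Theorem~2 very closely: the resulting function $F(\rho)$ has extrema at $\rho=0$, $\rho=1$, and at an interior critical point, and comparing the three resulting polynomials $G_i(c)$ on $[0,2]$ should produce the maximum $8-9\alpha$, attained at $c=2$ (Koebe). The \emph{high regime} $\alpha\ge 1$ is handled by the same reduction with the signs of $(2-3\alpha)$ and $(1-\alpha)$ reversed, yielding the symmetric bound $9\alpha-8$.

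The \emph{intermediate regime} $\tfrac{2}{3}\le\alpha\le 1$ is the most delicate, since a naive application of Lemmas~1 and~2 to the grouping
\[
-3\alpha\,c_2\Bigl(c_2-\tfrac{2(1-\alpha)}{\alpha}c_1^2\Bigr),
\]
whose inner factor is of Fekete--Szeg\"o type with $\sigma=4(1-\alpha)/\alpha\in[0,2]$, only delivers $|H_2^\alpha(2)|\le 5\alpha-\tfrac{4}{3}$, which matches the correct bound at $\alpha=\tfrac{2}{3}$ but is far too weak at $\alpha=1$. Consequently the sharp constant $1$ in this range has to come from the Lemma~3 substitution itself, by exhibiting a uniform bound $|12\,H_2^\alpha(2)|\le 12$ whose maximising configuration is interior (e.g.\ $c=1$, $x=-1$) rather than on the Koebe boundary.

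The main technical obstacle I foresee is the optimisation of $F(\rho)$ in the low regime. Unlike Theorem~2, where the coefficient of $x^2$ in the reduced expression was linear in $c$, here it equals $-c^2 t-\tfrac{3\alpha}{4}t^2$ with $t=4-c^2$, so the interior critical point is a rational function of both $c$ and $\alpha$, and the relative ordering of the $G_i(c)$ will likely force additional sub-cases. Once the bounds in all three regimes are established, the extremal functions are recovered from the equality cases: the Koebe function supplies $|H_2^\alpha(2)|=|8-9\alpha|$ outside $[\tfrac{2}{3},1]$, while $z/(1-z^2/\sqrt{\alpha})$, after verification of its first few Taylor coefficients, gives the constant $1$ in the intermediate range.
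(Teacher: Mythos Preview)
Your master identity and overall three-regime plan are sound, but the paper organises the cases rather differently, and one of the differences removes exactly the obstacle you flag.

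\textbf{Low regime $0\le\alpha\le\tfrac{2}{3}$.} The Fekete--Szeg\H{o} grouping you write down (and then discard) for the intermediate range,
\[
-\,\frac{\alpha c_2}{4}\Bigl(c_2-\frac{4(1-\alpha)}{\alpha}\,\frac{c_1^2}{2}\Bigr),
\]
is precisely the tool the paper uses \emph{here}: for $\alpha\le\tfrac{2}{3}$ one has $\sigma=4(1-\alpha)/\alpha\ge 2$, so Lemma~2 gives the sharp bound $2(\sigma-1)$. Combining this with Lemma~1 on $|c_1c_3|/3$ and $(2-3\alpha)|c_1|^4/12$ yields $|H_2^\alpha(2)|\le 8-9\alpha$ in one line. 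No Lemma~3 substitution, no $F(\rho)$, no interior critical point; the ``main technical obstacle'' you foresee is entirely self-inflicted.

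\textbf{Regime $\alpha\ge\tfrac{2}{3}$.} Here the paper does use the Lemma~3 substitution, as you propose, but the governing sign is that of $(10-9\alpha)$ (the coefficient of the linear-in-$x$ term after substitution), not of $(1-\alpha)$ or $(2-3\alpha)$. Accordingly the paper splits at $\alpha=\tfrac{10}{9}$, not at $\alpha=1$. In \emph{both} sub-ranges $\tfrac{2}{3}\le\alpha\le\tfrac{10}{9}$ and $\alpha\ge\tfrac{10}{9}$ one checks $F'(\rho)\ge 0$ on $[0,1]$, so $F(\rho)\le F(1)=G(c)$; elementary calculus on the resulting quartic in $c$ then gives $G\le 1$ for $\tfrac{2}{3}\le\alpha\le 1$ (maximum at $c=1$) and $G\le 9\alpha-8$ for $\alpha\ge 1$ (maximum at $c=2$). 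In particular, no interior-$\rho$ analysis is needed anywhere. Your identification of the intermediate extremal configuration as interior in $c$ (indeed $c=1$) is correct.

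So: keep the Lemma~3 argument for $\alpha\ge\tfrac{2}{3}$, but move the case split to $\tfrac{10}{9}$; and replace your planned Lemma~3 attack on the low regime by the direct Lemma~1/Lemma~2 estimate you already wrote down.
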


\begin{proof}
Following from the proof of Theorem 2, if $f\in S^\ast$, then $a_2=c_1$, $2a_3=c_2+c_1^2$ and $6a_4=2c_3+3c_1c_2+c_1^3$. Thus we have 
$$|H_2^\alpha(2)|=|a_2a_4-\alpha a_3^2|=\left|\frac{c_1c_3}{3}+(1-\alpha)\frac{c_1^2c_2}{2}+(2-3\alpha)\frac{c_1^4}{12}-\alpha\frac{c_2^2}{4}\right|.\eqno{(3.3)}$$
First we suppose $\alpha\leq 2/3$. Then we have
$$|H_2^\alpha(2)|\leq\frac{|c_1||c_3|}{3}+(2-3\alpha)\frac{|c_1|^4}{12}+\frac{\alpha|c_2|}{4}\left|c_2-\frac{4(1-\alpha)}{\alpha}\frac{c_1^2}{2}\right|$$
which, by Lemmas 1 and 2 noting that $4(1-\alpha)/\alpha\geq 2$ for $\alpha\leq 2/3$, gives $|H_2^\alpha(2)|\leq 8-9\alpha$.\vskip 2mm

Substituting for $c_2$ and $c_3$ in (3.3) using Lemma 3 we obtain 
$$\aligned|H_2^\alpha(2)|=
&\left|\frac{(8-9\alpha)c_1^4}{16}+\frac{(10-9\alpha)c_1^2(4-c_1^2)x}{24}-\frac{c_1^2(4-c_1^2)x^2}{12}\right.\\
&\left.+\frac{c_1(1-|x|^2)(4-c_1^2)z}{6}-\frac{\alpha(4-c_1^2)x^2}{16}\right|\endaligned\eqno{(3.4)}$$
Next we write (3.4) as:
$$\aligned|H_2^\alpha(2)|=
&\left|\frac{(9\alpha-8)c_1^4}{16}-\frac{(10-9\alpha)c_1^2(4-c_1^2)x}{24}+\frac{c_1^2(4-c_1^2)x^2}{12}\right.\\
&\left.-\frac{c_1(1-|x|^2)(4-c_1^2)z}{6}+\frac{\alpha(4-c_1^2)x^2}{16}\right|.\endaligned$$
Letting $c_1=c$, assuming without restriction that $c\in [0,2]$ and setting $\rho=|x|$, we have
$$\aligned|H_2^\alpha(2)|\leq
&\frac{(9\alpha-8)c^4}{16}+\frac{c(4-c^2)}{6}+\frac{(10-9\alpha)c^2(4-c^2)\rho}{24}\\
&\frac{(4-c^2)(c-2)[(4-3\alpha)c-6\alpha]\rho^2}{48}=F(\rho).\endaligned$$
Then
$$F'(\rho)=\frac{(10-9\alpha)c^2(4-c^2)}{24}+\frac{(4-c^2)(c-2)[(4-3\alpha)c-6\alpha]\rho}{24}$$
which is positive whenever $2/3\leq\alpha\leq 10/9$. Hence for $2/3\leq\alpha\leq 10/9$, $F(\rho)$ is increasing on $[0,1]$ so that $F(\rho)\leq F(1)$. Thus we have
$$|H_2^\alpha(2)|\leq F(1)=(\alpha-1)c^4-2(\alpha-1)c^2+\alpha=G(c)$$
Now by elementary calculus we see that the maximum of $G(c)$ on $[0,2]$ occurs at $c=1$ if $2/3\leq\alpha\geq 1$ while it is at $c=2$ for $1\leq\alpha\leq 10/9$ and is thus given by $G(1)=1$ for $2/3\leq\alpha\leq 1$ and $G(2)=9\alpha-8$ for $1\leq\alpha\leq 10/9$.\vskip 2mm
Next suppose $\alpha\geq 10/9$. Then we write (3.4) as
$$\aligned|H_2^\alpha(2)|=
&\left|\frac{(9\alpha-8)c_1^4}{16}+\frac{(9\alpha-10)c_1^2(4-c_1^2)x}{24}+\frac{c_1^2(4-c_1^2)x^2}{12}\right.\\
&\left.-\frac{c_1(1-|x|^2)(4-c_1^2)z}{6}+\frac{\alpha(4-c_1^2)x^2}{16}\right|.\endaligned$$
By similar argument as above we have
$$|H_2^\alpha(2)|\leq F(1)=\frac{3\alpha-2}{12}c^4+\frac{3\alpha-4}{3}c^2+\alpha=G(c)$$
which yields $|H_2^\alpha(2)|\leq 9\alpha-8$ similarly. Hence for $\alpha\geq 1$ we have $|H_2^\alpha(2)|\leq 9\alpha-8$.\vskip 2mm
We thus conclude that $$|H_2^\alpha(2)|\leq
\begin{cases}
8-9\alpha&\mbox if\quad 0\leq\alpha\leq\frac{2}{3},\\
1&\mbox if\quad\frac{2}{3}<\alpha\leq 1,\\
9\alpha-8&\mbox if\quad\alpha\geq 1
\end{cases}$$ which completes the proof.
\end{proof}

\begin{corollary}[\cite{AJ}]
Let $f\in S^\ast$. Then
$$|H_2(2)|\leq 1.$$
The inequality is sharp. Equality is attained by the Koebe function $k(z)=z/(1-z)^2$.
\end{corollary}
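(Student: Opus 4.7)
The plan is direct: this corollary is the specialization $\alpha=1$ of Theorem 3, supplemented by an explicit sharpness verification using the Koebe function. So almost no new work is needed beyond invoking the theorem already established.

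First I would unpack the notation. By the convention laid out just before the statement of Theorem 1, writing $H_2(2)$ means taking $\alpha=1$ in $H_2^{\alpha}(2)$, so that the formula displayed in Section 1 specialises to
$$H_2(2) = a_2a_4 - a_3^2.$$
Thus the corollary is precisely the claim that the quantity bounded by Theorem 3 does not exceed $1$ when $\alpha=1$, together with a sharpness statement pinpointing the extremal function.

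Second, I would appeal to Theorem 3. The value $\alpha=1$ lies in the middle regime $2/3 < \alpha \le 1$, for which the theorem gives the upper bound $1$; equivalently, the unified form $\max\{1,|9\alpha-8|\}$ reduces at $\alpha=1$ to $\max\{1,1\}=1$. This yields $|H_2(2)| \le 1$.

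Third, I would verify sharpness by direct substitution. For the Koebe function $k(z) = z/(1-z)^2$ the Taylor coefficients satisfy $a_n = n$, so
$$|a_2a_4 - a_3^2| = |2\cdot 4 - 3^2| = 1,$$
and equality holds. There is no technical obstacle to overcome: the entire substance of the proof has already been absorbed into Theorem 3, and the corollary simply reads off the value at the crossover point $\alpha=1$ where the piecewise bound attains its minimum, which happens to coincide with the classical Koebe extremal.
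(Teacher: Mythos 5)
Your proposal is correct and matches the paper's (implicit) argument exactly: the corollary is just Theorem 3 at $\alpha=1$, where $\max\{1,|9\alpha-8|\}=1$ and the theorem's own extremal-function list includes $z/(1-z)^2$ for $\alpha\in[0,2/3]\cup[1,\infty)$. Your direct check $|a_2a_4-a_3^2|=|8-9|=1$ for the Koebe function confirms sharpness as the paper intends.
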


Now using Theorems 1 to 3 in (1.2) and setting $\alpha=\lambda_3/\lambda_2$, $\beta=\lambda_1/\lambda_3$ and $\gamma=\lambda_2/\lambda_1$, we obtain the grand inequalities for $|H_3^{\lambda_1,\lambda_2,\lambda_3}(1)|$ as follows: 

\begin{theorem}
Let $f\in S^\ast$. Then for real numbers $\lambda_j$, $j=1,2,3$,
\begin{multline*}
|H_3^{\lambda_1,\lambda_2,\lambda_3}(1)|\leq 5\max\{\lambda_1,|4\lambda_2-3\lambda_1|\}+8\max\{\lambda_1,|3\lambda_3-2\lambda_1|\}\\+3\max\{\lambda_2,|9\lambda_3-8\lambda_2|\}.
\end{multline*}
The inequalities are sharp.
\end{theorem}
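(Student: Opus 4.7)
The plan is straightforward substitution: combine the triangle-inequality decomposition (1.2) with Theorems 1, 2 and 3, and with the classical sharp coefficient bound $|a_n|\leq n$ for $f\in S^\ast$. The algebra then collapses to a one-line calculation in each of the three summands, and the stated inequality follows by addition.

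Starting from
$$|H_3^{\lambda_1,\lambda_2,\lambda_3}(1)|\leq\lambda_1|a_5||H_2^\gamma(1)|+\lambda_2|a_3||H_2^\alpha(2)|+\lambda_3|a_4||B_2^\beta(1)|,$$
I would first insert $|a_3|\leq 3$, $|a_4|\leq 4$, $|a_5|\leq 5$ to produce the constants $5$, $3$ and $8$ in front of the three Fekete-Szeg\"o-type determinants. Theorem 1 then bounds the first summand by $5\lambda_1\max\{1,|4\gamma-3|\}$; distributing $\lambda_1$ into the maximum and using the identity $\gamma\lambda_1=\lambda_2$ rewrites this as $5\max\{\lambda_1,|4\lambda_2-3\lambda_1|\}$. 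The same two-step manoeuvre---apply the relevant extremal theorem, then push the outer $\lambda_j$ inside the maximum---handles the remaining two terms: Theorem 3 together with $\alpha\lambda_2=\lambda_3$ converts the second summand to $3\max\{\lambda_2,|9\lambda_3-8\lambda_2|\}$, and Theorem 2 together with $\beta\lambda_3=\lambda_1$ converts the third summand to $8\lambda_3\max\{\beta,|3-2\beta|\}=8\max\{\lambda_1,|3\lambda_3-2\lambda_1|\}$. Summing the three contributions recovers the stated inequality verbatim.

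For the sharpness assertion, the natural candidate is the Koebe function $k(z)=z/(1-z)^2$, which simultaneously attains the bound in each of Theorems 1, 2 and 3 whenever $\gamma\in[0,1/2]\cup[1,\infty)$, $\beta\in[0,1]\cup[3,\infty)$, and $\alpha\in[0,2/3]\cup[1,\infty)$; these are precisely the parameter regimes in which the corresponding inner expressions $|4\gamma-3|$, $|3-2\beta|$, $|9\alpha-8|$ win the respective maxima. The main obstacle I anticipate is the compatibility of these three outer regimes with simultaneous equality in the triangle inequality (1.2); so ``sharp'' here must be read termwise, in the sense that---for the appropriate sign configurations of $\lambda_1,\lambda_2,\lambda_3$---no one of the three max-terms on the right can be replaced by a smaller quantity, as is seen by evaluating each functional at $k(z)$ and invoking the sharpness already established in Theorems 1--3 and in the estimates $|a_n|\leq n$.
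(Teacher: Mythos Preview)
Your proposal is correct and follows exactly the route the paper takes: the paper's entire proof is the one-line remark ``using Theorems 1 to 3 in (1.2) and setting $\alpha=\lambda_3/\lambda_2$, $\beta=\lambda_1/\lambda_3$ and $\gamma=\lambda_2/\lambda_1$,'' and you have simply spelled out the implied substitution (including the classical $|a_n|\leq n$ bounds that produce the constants $5,8,3$). Your reading of ``sharp'' as termwise sharpness also matches the paper, which in the subsequent corollary explicitly says ``best possible in the sense that component inequalities in the last theorem are best possible.''
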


We conclude with the following interesting and nice bounds on coefficient determinants of starlike functions.\vskip 2mm

\begin{corollary}
Let $f\in S^\ast$. Then
$$|H_3^{1,1,1}(1)|\leq 16,\;|H_3^{1,1,2}(1)|\leq 81,\;|H_3^{1,2,1}(1)|\leq 51.5,\;|H_3^{2,1,1}(1)|\leq 29,$$
$$|H_3^{1,2,2}(1)|\leq 63,\;|H_3^{2,1,2}(1)|\leq 78,\;|H_3^{2,2,1}(1)|\leq 52.5,\;|H_3^{1,3,2}(1)|\leq 105.$$
All inequalities are best possible in the sense that component inequalities in the last theorem are best possible.
\end{corollary}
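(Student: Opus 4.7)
The plan is to apply Theorem~4 directly, substituting each of the eight explicit triples $(\lambda_1,\lambda_2,\lambda_3)$ into its right-hand side
$$5\max\{\lambda_1,|4\lambda_2-3\lambda_1|\}+8\max\{\lambda_1,|3\lambda_3-2\lambda_1|\}+3\max\{\lambda_2,|9\lambda_3-8\lambda_2|\},$$
and evaluating the three maxima case by case. Nothing beyond Theorem~4 is invoked; the argument is pure arithmetic.

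First I would dispose of the symmetric case $(1,1,1)$: each of $|4\lambda_2-3\lambda_1|$, $|3\lambda_3-2\lambda_1|$, $|9\lambda_3-8\lambda_2|$ equals $1$, so all three maxima collapse to $1$ and the sum is $5+8+3=16$. For a mildly non-trivial illustration, take $(2,1,1)$: the three maxima become $\max\{2,2\}=2$, $\max\{2,1\}=2$ and $\max\{1,1\}=1$, yielding $10+16+3=29$. The remaining six triples $(1,1,2)$, $(1,2,1)$, $(1,2,2)$, $(2,1,2)$, $(2,2,1)$, $(1,3,2)$ are handled in exactly the same manner: compute the three signed quantities $4\lambda_2-3\lambda_1$, $3\lambda_3-2\lambda_1$ and $9\lambda_3-8\lambda_2$, take absolute values, compare them against $\lambda_1$ or $\lambda_2$ as prescribed to form each $\max$, and weight the three results by $5$, $8$ and $3$ respectively.

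The sharpness assertion is built into the statement itself: it claims only that the inequalities are best possible in the sense that the component inequalities feeding Theorem~4 — namely the classical coefficient bound $|a_n|\le n$ together with Theorems~1, 2, and~3 — are each individually sharp on $S^\ast$. Consequently no joint extremality argument is required; I would simply cite that the triangle-inequality decomposition (1.2) together with the three sharp estimates produces bounds which cannot be tightened at the level of Theorem~4.

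There is no genuine mathematical obstacle here; the entire proof is substitution. The only practical risk is a bookkeeping slip in one of the twenty-four absolute-value evaluations, so I would perform the eight cases in tabular form and double-check each $M_1,M_2,M_3$ against the prescribed sum before combining.
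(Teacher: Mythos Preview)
Your proposal is correct and matches the paper's own approach: the paper gives no explicit proof of this corollary, presenting it as an immediate numerical consequence of Theorem~4, so the intended argument is precisely the direct substitution of each triple $(\lambda_1,\lambda_2,\lambda_3)$ into the bound $5\max\{\lambda_1,|4\lambda_2-3\lambda_1|\}+8\max\{\lambda_1,|3\lambda_3-2\lambda_1|\}+3\max\{\lambda_2,|9\lambda_3-8\lambda_2|\}$ that you describe. Your two worked cases $(1,1,1)$ and $(2,1,1)$ are computed correctly, and the sharpness remark is likewise exactly in the spirit the paper intends.
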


The first $|H_3(1)|=|H_3^{1,1,1}(1)|\leq 16$ was reported in \cite{KO}.

\vspace{10pt}

\noindent
{\small
ADDRESSES:\\
Current: Department of Physical Sciences, Al-Hikmah University, Ilorin.\\
Permanent: Department of Mathematics, University of Ilorin, Ilorin.\\
kobabalola@gmail.com}

\end{document}